\numberwithin{equation}{section}
\theoremstyle{definition}\newtheorem{definition}{Definition}[section]
\newtheorem{remark}[definition]{Remark}
\newtheorem{proposition}[definition]{Proposition}
\newtheorem{lemma}[definition]{Lemma}
\newtheorem{theorem}[definition]{Theorem}
\newtheorem{corollary}[definition]{Corollary}
\newlist{enumlist}{enumerate}{1}
\setlist[enumlist]{labelindent=0cm,label=\arabic*$^\circ$,labelwidth=2.5ex,labelsep=0.5ex,leftmargin=3ex,align=left,topsep=0.5ex,itemsep=1ex,parsep=1ex}
\newlist{itemlist}{itemize}{1}
\setlist[itemlist]{labelindent=0cm,label=$\bullet$,labelwidth=2.5ex,labelsep=0.5ex,leftmargin=3ex,align=left,topsep=0.5ex,itemsep=1ex,parsep=1ex}
\newcommand{\eps}{\varepsilon}
\newcommand{\C}{\mathbb{C}}
\newcommand{\so}{\text{\it so}}
\newcommand{\ot}{\otimes}
\newcommand{\bA}{\mathbf{A}}
\newcommand{\bM}{\mathbf{M}}
\newcommand{\cM}{\mathcal{M}}
\newcommand{\ns}{\mathord{\text{\rm n}_{\text{\rm\tiny s}}}}
\newcommand{\cU}{\mathcal{U}}
\newcommand{\F}{\mathbb{F}}
\newcommand{\cMtil}{\widetilde{\mathcal{M}}}
\begin{document}

\begin{center}
{\boldmath\LARGE\bf On the optimal paving over MASAs \vspace{0.5ex}\\ in von Neumann algebras}

\bigskip

{\sc by Sorin Popa\footnote{Mathematics Department, UCLA, CA 90095-1555 (United States), popa@math.ucla.edu\\
Supported in part by NSF Grant DMS-1401718} and Stefaan Vaes\footnote{KU~Leuven, Department of Mathematics, Leuven (Belgium), stefaan.vaes@wis.kuleuven.be \\
    Supported by ERC Consolidator Grant 614195 from the European Research Council under the European Union's Seventh Framework Programme.}}
\end{center}

\begin{abstract}\noindent
We prove that if $A$ is a singular MASA in a II$_1$ factor $M$ and $\omega$ is a free ultrafilter,  then for any
$x\in M\ominus A$, with $\|x\|\leq 1$, and any $n\geq 2$,
there exists a partition of $1$ with projections $p_1, p_2, ..., p_n\in A^\omega$ (i.e. a {\it paving}) such that $\|\Sigma_{i=1}^n p_i x p_i\|\leq 2\sqrt{n-1}/n$, and give examples
where this is sharp. Some open problems on optimal pavings are discussed.
\end{abstract}

\section{Introduction}

A famous problem formulated by R.V. Kadison and I.M. Singer in 1959 asked whether
the diagonal MASA (maximal abelian $^*$-subalgebra) $\mathcal D$ of the algebra $\mathcal B(\ell^2\mathbb N)$, of all linear bounded operators on
the Hilbert space $\ell^2\mathbb N$, satisfies the {\it paving property}, requiring that for any contraction $x=x^*\in \mathcal B(\ell^2\mathbb N)$ with $0$ on the diagonal,
and any $\varepsilon > 0$, there exists a partition of $1$ with projections $p_1, ..., p_n \in \mathcal D$, such that $\|\sum_i p_i x p_i\|\leq \varepsilon$.
This problem has been settled in the affirmative by A. Marcus, D. Spielman and N. Srivastava in \cite{MSS13},
with an actual estimate $n \leq 12^4\eps^{-4}$ for the {\it paving size}, i.e., for the minimal number $n=n(x,\eps)$ of such projections.

In a recent paper \cite{PV14}, we considered a notion of paving
for an arbitrary MASA in a von Neumann algebra $A\subset M$,
that we called $\so$-{\it paving},
which requires that for any $x=x^*\in M$ and any $\varepsilon >0$, there exist $n\geq 1$, a net of partitions of $1$ with $n$ projections  $p_{1,i}, ..., p_{n,i}\in A$
and projections $q_i\in M$ such that $\|q_i(\Sigma_{k=1}^n p_{k,i}xp_{k,i} - a_i)q_i\|\leq \eps$, $\forall i$, and $q_i \rightarrow 1$ in the $\so$-topology.

This property
is in general weaker than the classic Kadison-Singer norm paving, but it coincides with it for the diagonal MASA
$\mathcal D\subset \mathcal B(\ell^2\mathbb N)$. We conjectured in \cite{PV14}
that any MASA $A\subset M$ satisfies
$\so$-paving. We used the results in \cite{MSS13} to check this conjecture
for all MASAs in type I von Neumann algebras, and all Cartan MASAs in amenable von Neumann algebras and in group measure space factors arising from
profinite actions, with the estimate $12^4\eps^{-4}$ for the $\so$-paving size derived from \cite{MSS13} as well.

We also showed  in \cite{PV14} that if $A$ is the range
of a normal conditional expectation, $E:M\rightarrow A$, and $\omega$ is a free ultrafilter on $\mathbb N$,
then $\so$-paving for $A\subset M$ is equivalent to the usual Kadison-Singer paving for the
ultrapower MASA $A^\omega\subset M^\omega$, with the norm paving size for $A^\omega\subset M^\omega$
coinciding with the $\so$-paving size for $A\subset M$. In the case $A$ is a singular MASA in a II$_1$ factor $M$,
norm-paving for the ultrapower inclusion $A^\omega \subset M^\omega$ has been established in \cite{P13},
with paving size $1250 \eps^{-3}$. This estimate
was improved   to $< 16\eps^{-2}+1$ in \cite{PV14}, while also shown to be $\geq \eps^{-2}$ for arbitrary MASAs in II$_1$ factors.

In this paper we prove that the paving size for singular MASAs in II$_1$ factors is in fact $< 4\eps^{-2}+1$,
and that for certain singular MASAs this is sharp.
More precisely, we prove that for any contraction $x\in M^\omega$ with $0$ expectation onto $A^\omega$, and for any $n\geq 2$, there
exists a partition of $1$ with $n$ projections $p_i \in A^\omega$ such that $\|\Sigma_{i=1}^n p_i x p_i \|\leq 2\sqrt{n-1}/n$. In fact,
given any finite set of contractions $F\subset M^\omega \ominus A^\omega$, we can find a partition $p_1, ..., p_n \in A^\omega$
that satisfies this estimate for all $x\in F$, so even the {\it multipaving size}  for singular MASAs is $< 4\eps^{-2}+1$.

To construct pavings satisfying this estimate, we first use Theorem 4.1(a) in \cite{P13} to  get
a unitary $u\in A^\omega$ with $u^n=1$, $\tau(u^k)=0$, $1\leq k \leq n-1$,  such that
any word with alternating letters from $\{u^k \mid 1\leq k \leq n-1\}$ and $F\cup F^*$ has trace $0$. This implies that for each
$x\in F$ the set $X = \{u^{i-1}xu^{-i+1} \mid i=1, 2, ..., n\}$ satisfies the conditions
$\tau(\Pi_{k=1}^m (x_{2k-1} x_{2k}^*))=0=\tau(\Pi_{k=1}^m (x_{2k-1}^* x_{2k}))$, for all $m$ and all $x_k \in X$ with $x_k \neq x_{k+1}$ for all $k$. We call {\it L-freeness} this property of a subset of a II$_1$ factor.
We then prove the general result, of independent interest,
that any L-free set of contractions $\{x_1,\ldots,x_n\}$ satisfies the norm estimate $\|\Sigma_{i=1}^n x_i \|\leq 2\sqrt{n-1}$. We do this
by first ``dilating'' $\{x_1, ..., x_n\}$ to an L-free set of unitaries $\{U_1, ..., U_n\}$ in a larger II$_1$ factor, for which we deduce
the Kesten-type estimate $\|\Sigma_{i=1}^n U_i\|
= 2\sqrt{n-1}$ from results in \cite{AO74}. This implies the inequality for the L-free contractions as well. By applying this to $\{u^{i-1}xu^{1-i} \mid i=1,\ldots,n\}$ and
taking into account that $\frac{1}{n}\Sigma_{i=1}^n u^{i-1}xu^{1-i}=\Sigma_{i=1}^n p_i x p_i$, where $p_1, ..., p_n$ are the minimal spectral projections of $u$,
we get $\|\Sigma_{i=1}^n p_i x p_i \|\leq 2\sqrt{n-1}/n$, $\forall x\in F$.

We also notice that if $M$ is a II$_1$ factor,  $A\subset M$ is a MASA and $v\in M$ a self-adjoint unitary of trace $0$
which is free with respect to $A$,
then $\|\Sigma_{i=1}^n p_i v p_i\| \geq 2\sqrt{n-1}/n$ for any partition of $1$ with projections in $A^\omega$, with equality if and only if $\tau(p_i)=1/n$, $\forall i$.
A concrete example is when $M=L(\mathbb Z * (\mathbb Z/2\mathbb Z))$, $A=L(\mathbb Z)$ (which is a singular MASA in $M$ by \cite{P81})
and $v=v^*\in L(\mathbb Z/2\mathbb Z)\subset M$ denotes the canonical generator.
This shows that the estimate $4\eps^{-2}+1$
for the paving size is in this case optimal.

The constant $2\sqrt{n-1}$ is known to coincide with the spectral radius of the $n$-regular tree, and with the first eigenvalue less than $n$ of  
$n$-regular Ramanujan graphs. Its occurence in this context leads us to   a more refined version of a conjecture formulated in \cite{PV14}, 
predicting that for any MASA $A\subset M$ which is range of a normal conditional expectation, 
any $n \geq 2$ and any contraction $x=x^*\in M$ with $0$ expectation onto $A$, the infimum 
$\eps(A\subset M; n, x)$ over all  
norms of pavings of $x$, $\| \Sigma_{i=1}^n p_i x p_i\| $, with $n$ projections $p_1, ..., p_n$ in $A^\omega$, $\Sigma_i p_i=1$, is bounded above 
by $2\sqrt{n-1}/n$, and that in fact $\sup \{\eps(A\subset M; n, x) \mid x=x^*\in M \ominus A, \|x\|\leq 1\}=2\sqrt{n-1}/n$. Such an optimal 
estimate would be particularly interesting to establish for the diagonal MASA $\mathcal D \subset \mathcal B(\ell^2\mathbb Z)$. 

\section{Preliminaries}

A well known result of H. Kesten in \cite{Ke58} shows that if $\mathbb F_k$ denotes the free group with $k$ generators $h_1, ..., h_k$, and $\lambda$ is the left
regular representation of $\mathbb F_k$ on $\ell^2\mathbb F_k$, then
the norm of the {\it Laplacian operator} $L=\Sigma_{i=1}^k (\lambda(h_i) + \lambda (h_i^{-1}))$ is equal to $2\sqrt{2k-1}$.  It was also shown in \cite{Ke58}
that, conversely, if $k$ elements $h_1, ..., h_k$ in a group $\Gamma$ satisfy $\|\Sigma_{i=1}^k \lambda(h_i) + \lambda (h_i^{-1})\|=2\sqrt{2k-1}$, then $h_1, ..., h_k$ are freely
independent, generating a copy of $\mathbb F_k$ inside $\Gamma$. The calculation of the norm of $L$ in \cite{Ke58} uses the formalism of
random walks on groups, but it really amounts to calculating the higher moments $\tau(L^{2n})$
and using the formula $\|L\|=\lim_m(\tau(L^{2m}))^{1/2m}$, where $\tau$ denotes the canonical (normal faithful) tracial state on the group von Neumann algebra $L(\mathbb F_k)$.

Kesten's result implies that whenever $u_1, ..., u_k$ are
freely independent Haar unitaries in a type II$_1$ factor $M$
(i.e., $u_1, ..., u_k$ generate a copy of $L(\mathbb F_k)$ inside $M$), then one has $\| \Sigma_{i=1}^k u_i + u_i^*\|= 2 \sqrt{2k-1}$. In particular, if $M$ is the free group factor
$L(\mathbb F_k)$ and $u_i=\lambda(h_i)$, where $h_1, ..., h_k\in \mathbb F_k$ as above, then $\| \Sigma_{i=1}^k \alpha_iu_i + \overline{\alpha_i}u_i^*\|= 2 \sqrt{2k-1}$,
for any scalars $\alpha_i \in \mathbb C$ with $|\alpha_i|=1$.

Estimates of norms of linear combinations of elements satisfying more general free independence relations
in group II$_1$ factors $L(\Gamma)$ have later been obtained in \cite{L73}, \cite{B74}, \cite{AO74}\footnote{See also the more ``rough'' norm estimates for elements in $L(\mathbb F_n)$ obtained by R. Powers in 1967 in relation to another problem of Kadison,
but published several years later in \cite{Po75}, and which motivated in part the work in \cite{AO74}.}.
These estimates involve elements in $L(\Gamma)$
(viewed as convolvers on $\ell^2\Gamma$)
that are supported on a subset $\{g_1,\ldots,g_n\} \subset \Gamma$ satisfying the following
weaker freeness condition, introduced in \cite{L73}: whenever $k \geq 1$ and $i_s \neq j_s$, $j_s \neq i_{s+1}$ for all $s$, we have that
$$g_{i_1} g_{j_1}^{-1} \cdots g_{i_k} g_{j_k}^{-1} \neq e \; .$$
In \cite{B74} and \cite{AO74}, this is called the Leinert property and it is proved
to be equivalent with $\{g_1^{-1} g_2,\ldots,g_1^{-1} g_n\}$ freely generating a copy of $\F_{n-1}$.
The most general calculation of norms of elements $x=\Sigma_i c_i \lambda(g_i) \in L(\Gamma)$,
supported on a Leinert set $\{g_i\}_i$, with arbitrary coefficients $c_i \in \mathbb C$,
was obtained by Akemann and Ostrand in \cite{AO74}.
The calculation shows in particular that if $\{g_1, \ldots, g_n\}$ satisfies Leinert's freeness condition then
$\|\Sigma_{i=1}^n \lambda(g_i)\|=2\sqrt{n-1}$.
Since $h_1, ..., h_k\in \Gamma$ freely independent implies $\{h_i, h_i^{-1} \mid 1\leq i \leq k\}$ is a Leinert set, the result in \cite{AO74} does recover
Kesten's theorem as well. Like in \cite{Ke58}, the norm of an element of the form $L=\Sigma_{i=1}^n c_i \lambda(g_i)$ in \cite{AO74}
is calculated by evaluating $\lim_n \tau((L^*L)^n)^{1/2n}$ (by
computing the generating function of the moments of $L^*L$).

An argument similar to \cite{Ke58} was used in \cite{Le96}
to prove that, conversely, if some elements $g_1, ..., g_n$ in a group $\Gamma$ satisfy $\|\Sigma_{i=1}^n \lambda(g_i)\|=2\sqrt{n-1}$, then $g_1, ..., g_n$
is a Leinert set. On the other hand, note that if $g_1, ..., g_n$ are $n$ arbitrary elements in an arbitrary group $\Gamma$ and we denote $L=\Sigma_{i=1}^n\lambda(g_i)$
the corresponding Laplacian, then the $n$'th moment $\tau((L^*L)^n)$ is bounded from below by the $n$'th moment of the Laplacian obtained by taking
$g_i$ to be the generators of $\mathbb F_n$. Thus, we always have $\|\Sigma_{i=1}^n\lambda(g_i)\|\geq 2\sqrt{n-1}$. More generally, if
$v_1, ..., v_n$ are unitaries in a von Neumann algebra $M$ with normal faithful trace state $\tau$,
such that any word $v_{i_1}v_{j_1}^*v_{i_2}v_{j_2}^*.... v_{i_m}v^*_{j_m}$, $\forall m\geq 1$,
$\forall 1\leq i_k, j_k \leq n$, has trace with non-negative real part, then
$\|\Sigma_{i=1}^n v_i \|\geq 2\sqrt{n-1}$. In particular, for any unitaries $u_1, ..., u_n \in M$ one has $\|\Sigma_{i=1}^n u_i \otimes \overline{u_i}\|\geq 2\sqrt{n-1}$.

For convenience, we state below some norm calculations from \cite{AO74}, formulated in the form that will be used in the sequel:

\begin{proposition}[\cite{AO74}] If $v_1, v_2, ..., v_{n-1}\in M$ are freely independent Haar unitaries, then
\begin{equation}\label{eq.211}
\|1 + \Sigma_{i=1}^{n-1} v_i \| = 2 \sqrt{n-1}.
\end{equation}
Also, if $\alpha_0, ..., \alpha_{n-1}\in \mathbb C$, $\Sigma_i |\alpha_i|^2=1$, then
\begin{equation}\label{eq.212}
\|\alpha_01+\Sigma_{i=1}^{n-1}\alpha_i v_i\|\leq 2\sqrt{1-1/n}.
\end{equation}
\end{proposition}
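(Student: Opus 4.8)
The plan is to derive both norm identities from the Akemann--Ostrand computations in \cite{AO74} after a short reduction. The first step is to note that it suffices to treat the case $M=L(\F_{n-1})$: if $v_1,\dots,v_{n-1}\in M$ are freely independent Haar unitaries, then there is a trace-preserving $*$-isomorphism of $W^*(v_1,\dots,v_{n-1})$ onto $L(\F_{n-1})$ carrying $v_i$ to $\lambda(h_i)$, where $h_1,\dots,h_{n-1}$ denote free generators of $\F_{n-1}$; since the operator norm of an element is the same in any von Neumann algebra containing it, it is enough to evaluate $\|\lambda(e)+\Sigma_{i=1}^{n-1}\lambda(h_i)\|$ and $\|\alpha_0\lambda(e)+\Sigma_{i=1}^{n-1}\alpha_i\lambda(h_i)\|$ in $L(\F_{n-1})$.

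For \eqref{eq.211} the key observation is that $S=\{e,h_1,\dots,h_{n-1}\}$ is a Leinert subset of $\F_{n-1}$ of cardinality $n$: with the labelling $g_1=e$ and $g_{i+1}=h_i$ for $1\le i\le n-1$, the characterisation recalled above says this is equivalent to $\{g_1^{-1}g_2,\dots,g_1^{-1}g_n\}=\{h_1,\dots,h_{n-1}\}$ freely generating a copy of $\F_{n-1}$, which holds by construction. The Akemann--Ostrand formula for Leinert sets with unit coefficients then gives $\|\Sigma_{g\in S}\lambda(g)\|=2\sqrt{|S|-1}=2\sqrt{n-1}$, which is \eqref{eq.211}. (Equivalently: multiplying by a Haar unitary $w$ free from $v_1,\dots,v_{n-1}$, the unitaries $w,wv_1,\dots,wv_{n-1}$ are again $n$ freely independent Haar unitaries, so $\|1+\Sigma_i v_i\|=\|w+\Sigma_i wv_i\|$ is the norm of a sum of $n$ free generators, equal to $2\sqrt{n-1}$ by \cite{AO74}.)

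For \eqref{eq.212} I would invoke the full Akemann--Ostrand computation, which evaluates $\|\Sigma_{i=1}^m c_i\lambda(g_i)\|$ for an arbitrary Leinert set $\{g_1,\dots,g_m\}$ and arbitrary coefficients $c_i\in\C$. Applied to the Leinert set $S$ of cardinality $n$ with coefficients $\alpha_0,\dots,\alpha_{n-1}$, that computation yields $\|\alpha_0\lambda(e)+\Sigma_{i=1}^{n-1}\alpha_i\lambda(h_i)\|\leq 2\sqrt{1-1/n}\,\big(\Sigma_{i=0}^{n-1}|\alpha_i|^2\big)^{1/2}$, with equality exactly when all the moduli $|\alpha_i|$ agree; under the normalisation $\Sigma_i|\alpha_i|^2=1$ this is \eqref{eq.212}. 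If one prefers not to quote the general-coefficient formula directly, one can first reduce to the symmetric case: absorbing the phases of the $\alpha_i$ into the $v_i$ and multiplying by a unimodular scalar one may assume $\alpha_i\geq 0$, and then multiplying by a free Haar unitary turns $\alpha_0 1+\Sigma_i\alpha_i v_i$ into $\Sigma_{k=1}^{n}\beta_k u_k$ with $u_1,\dots,u_n$ freely independent Haar unitaries and $\Sigma_k\beta_k^2=1$, at which point the bound $\|\Sigma_k\beta_k u_k\|\leq 2\sqrt{1-1/n}$ (equality iff all $\beta_k=1/\sqrt n$) is exactly the Akemann--Ostrand moment estimate.

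The Leinert-set verification and the transfer between $M$ and $L(\F_{n-1})$ are routine; the substantive ingredient, and the step I expect to be the real obstacle in any self-contained argument, is the Akemann--Ostrand evaluation of $\|L\|=\lim_{m}\tau\big((L^*L)^m\big)^{1/2m}$ for $L=\Sigma_i c_i\lambda(g_i)$ supported on a Leinert set, carried out in \cite{AO74} by computing the generating function of the moments $\tau\big((L^*L)^m\big)$: the unit-coefficient case gives the Kesten-type value $2\sqrt{n-1}$ of \eqref{eq.211}, while optimising the general formula subject to $\Sigma_i|c_i|^2=1$ gives the constant $2\sqrt{1-1/n}=2\sqrt{n-1}/\sqrt n$ of \eqref{eq.212}.
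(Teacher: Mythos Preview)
Your proposal is correct and matches the paper's approach exactly: the paper does not give a proof of this proposition at all, simply attributing it to \cite{AO74}, and the surrounding discussion in the preliminaries section sets up precisely the reduction you describe (freely independent Haar unitaries $\leftrightarrow$ generators of $L(\F_{n-1})$; $\{e,h_1,\dots,h_{n-1}\}$ is a Leinert set because $\{h_1,\dots,h_{n-1}\}$ freely generates $\F_{n-1}$; then invoke the Akemann--Ostrand moment computation). Your write-up in fact spells out more of the routine verification than the paper does.
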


Note that \eqref{eq.211} above shows in particular that if $p, q\in M$ are projections with $\tau(p)=1/2$ and $\tau(q)=1/n$, for some $n \geq 3$,  and
they are freely independent, then $\|qpq\|=1/2+\sqrt{n-1}/n$. Indeed, any two such projections can be thought of as embedded into $L(\mathbb F_2)$
with $p$ and $q$ lying in the MASAs of the two generators, $p\in A_1$, respectively $q\in A_2$. Denote $v=2p-1$. Let
$q_1=q, q_2, ..., q_n \in A_2$ be mutually
orthogonal projections of trace $1/n$ and denote $u=\Sigma_{j=1}^{n}\lambda^{j-1} q_j$, where $\lambda=2\exp(2\pi i/n)$.
It is then easy to see that the elements
$v_k=vu^kvu^{-k}$, $k=1, 2, ..., n-1$ are  freely independent Haar unitaries. By \eqref{eq.211} we thus have
$\|\Sigma_{k=0}^{n-1} u^kvu^{-k}\|=\|1+\Sigma_{k=1}^{n-1}vu^kvu^{-k}\|=2\sqrt{n-1}.$
But $\Sigma_{k=0}^{n-1} u^kvu^{-k}= n (\Sigma_{j=1}^n q_j v q_j)$,
implying that

$$\|qvq\|=\|q(2p-1)q\|=2\sqrt{n-1}/n=2\sqrt{\tau(q)(1-\tau(q))}$$
or equivalently

$$
\|qpq\|=1/2+\sqrt{n-1}/n=\tau(p)+\sqrt{\tau(q)(1-\tau(q))}.
$$

The computation of the norm of the product of freely independent projections $q, p$ of arbitrary trace in $M$
(in fact, of the whole spectral distribution of $qpq$) was obtained by Voiculescu in \cite{Vo86}, as one of the first applications of his
multiplicative free convolution (which later became a powerful tool in free probability). We recall here these norm estimates, which in particular
show that the first of the above norm calculations holds true for projections $q$ of arbitrary trace (see also \cite{ABH87} for the case
$\tau(q)=1/n$, $\tau(p)=1/m$, for integers $n \geq m\geq 2$):

\begin{proposition}[\cite{Vo86}] \label{prop.22}
If $p, q\in M$ are freely independent projections with $\tau(q)\leq \tau(p)\leq 1/2$, then
\begin{equation}\label{eq.221}
\|qpq\| = \tau(p) + \tau(q)- 2\tau(p)\tau(q) + 2\sqrt{\tau(p) \, \tau(1-p) \, \tau(q) \, \tau(1-q)}.
\end{equation}
If in addition $\tau(p)=1/2$ and we denote $v=2p-1$, then
\begin{equation}\label{eq.222}
\|qvq\|= 2\sqrt{\tau(q) \, \tau(1-q)}.
\end{equation}
\end{proposition}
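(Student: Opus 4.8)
The proposition records Voiculescu's determination of the spectral distribution of a product of two free projections \cite{Vo86}, so my plan is to reconstruct it from the $S$-transform of free multiplicative convolution. Write $a=\tau(p)$, $b=\tau(q)$, and let $\mu$ be the spectral distribution of $qpq\in M$ with respect to $\tau$. Since $(qpq)^m=q(pq)^{m-1}pq$ for $m\geq 1$ and $q^2=q$, one gets $\tau((qpq)^m)=\tau((pq)^m)=\tau((pqp)^m)$, so $\mu$ is also the distribution of $pqp$, and (using $\|x\|=\lim_m\tau(x^m)^{1/m}$ for $0\leq x$) $\|qpq\|=\|pqp\|=\max\operatorname{supp}\mu$. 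As $p$ and $q$ are free projections, $\mu$ is the free multiplicative convolution $\mu_p\boxtimes\mu_q$ of the Bernoulli laws $\mu_p=(1-a)\delta_0+a\delta_1$ and $\mu_q=(1-b)\delta_0+b\delta_1$.

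A direct computation with $\psi_\mu(z)=\int\frac{tz}{1-tz}\,d\mu(t)$ gives $S_{\mu_p}(z)=\frac{1+z}{a+z}$ and $S_{\mu_q}(z)=\frac{1+z}{b+z}$, hence $S_\mu=S_{\mu_p}S_{\mu_q}$ yields $\chi_\mu(z):=\psi_\mu^{-1}(z)=\frac{z}{1+z}S_\mu(z)=\frac{z(1+z)}{(a+z)(b+z)}$. Since $\chi_\mu$ is rational of degree two, its compositional inverse $\psi_\mu$, and with it the Cauchy transform $G_\mu(\zeta)=\zeta^{-1}\bigl(1+\psi_\mu(1/\zeta)\bigr)$ whose graph is parametrized by $z\mapsto\bigl(1/\chi_\mu(z),\,(1+z)\chi_\mu(z)\bigr)$, is algebraic of degree two; so the edges of $\operatorname{supp}\mu$ are the branch points $\zeta_\pm=1/\chi_\mu(w_\pm)$, where $w_\pm$ are the critical points of $\chi_\mu$. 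Solving $\chi_\mu'(w)=0$ reduces to the quadratic $(a+b-1)w^2+2abw+ab=0$, with roots $w_\pm=-\sqrt{ab}\bigl(\sqrt{ab}\pm\sqrt{(1-a)(1-b)}\bigr)^{-1}$; substituting these back into $1/\chi_\mu$ and simplifying (using $ab(1-a)(1-b)=\bigl(\sqrt{ab}\,\sqrt{(1-a)(1-b)}\bigr)^2$ and $a(1-b)\cdot b(1-a)=ab(1-a)(1-b)$) produces the two values $\zeta_\pm=a+b-2ab\pm2\sqrt{ab(1-a)(1-b)}$, equivalently $\zeta_\pm=\bigl(\sqrt{\tau(p)\tau(1-q)}\pm\sqrt{\tau(q)\tau(1-p)}\bigr)^2$.

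It then remains to see that $\max\operatorname{supp}\mu=\zeta_+$ rather than a larger atom. The only atom of $\mu$ that could exceed $\zeta_+$ is the one at $1$, of mass $\dim(\operatorname{ran}p\cap\operatorname{ran}q)=\max(0,a+b-1)$, which vanishes when $b\leq a\leq\tfrac12$; hence $\|qpq\|=\zeta_+$, which is \eqref{eq.221}. Moreover $\zeta_+\leq1$, since $2\sqrt{ab(1-a)(1-b)}\leq ab+(1-a)(1-b)=1-a-b+2ab$ by the arithmetic–geometric mean inequality, with equality exactly when $ab=(1-a)(1-b)$, i.e. $a=b=\tfrac12$. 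For \eqref{eq.222}, put $a=\tfrac12$ and work inside $qMq$, where $qvq=2\,qpq-q$. Because $b\leq a$ one has $\operatorname{ran}q\cap\ker p=\{0\}$, so $qpq$ has no atom at $0$ in $qMq$, and by the computation above (with $a=\tfrac12$) its distribution there is supported on $[\tfrac12-\sqrt{b(1-b)},\,\tfrac12+\sqrt{b(1-b)}]$; pushing forward by $t\mapsto 2t-1$ shows the spectrum of $qvq$ lies in $[-2\sqrt{b(1-b)},\,2\sqrt{b(1-b)}]$, and since $qvq$ acts as $0$ on $\operatorname{ran}(1-q)$ we conclude $\|qvq\|=2\sqrt{b(1-b)}=2\sqrt{\tau(q)\tau(1-q)}$.

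The only real obstacle is bookkeeping: the back-substitution in the second paragraph is routine but lengthy, and one must keep track of the atoms of $\mu$ at $0$ and at $1$ and pass carefully between the distributions of $qpq$ regarded in $M$, in $pMp$, and in $qMq$ — this is precisely where the hypothesis $\tau(q)\leq\tau(p)\leq\tfrac12$ is used. (For the special values $\tau(p)=\tfrac12$, $\tau(q)=\tfrac1n$, estimate \eqref{eq.222} already follows from \eqref{eq.211} by the computation in the paragraph preceding this proposition; the content here is the general trace.)
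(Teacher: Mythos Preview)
The paper does not prove Proposition~\ref{prop.22} at all: it is stated as a result of Voiculescu and simply cited to \cite{Vo86}, with the remark that the special case $\tau(p)=1/2$, $\tau(q)=1/n$ follows from \eqref{eq.211} as worked out in the paragraph preceding the proposition. Your argument, which reconstructs Voiculescu's computation via the $S$-transform and free multiplicative convolution, is correct and is exactly the method of \cite{Vo86}; in particular your handling of the possible atoms at $0$ and $1$ under the hypothesis $\tau(q)\le\tau(p)\le 1/2$, and your passage to $qMq$ for \eqref{eq.222}, are accurate. So you have supplied precisely what the paper chose to omit.
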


\section{$L$-free sets of contractions and their dilation}

Recall from \cite{P13} that two selfadjoint sets $X, Y \subset M\ominus \mathbb C1$ of a tracial von Neumann algebra $M$
are called {\it freely independent sets\footnote{We specifically consider this condition for subsets $X,Y \subset M \ominus \C 1$, not to be confused with the freeness of the von Neumann algebras generated by $X$ and $Y$.}} if the trace of any word with letters alternating
from $X$ and $Y$ is equal to $0$. Also, a subalgebra $B\subset M$ is called {\it freely independent of a set} $X$, if $X$ and $B\ominus \mathbb C1$ are freely
independent as sets.  Several results were obtained in \cite{P13} about constructing a ``large subalgebra'' $B$ inside a given subalgebra $Q\subset M$ that is freely independent of a given countable set $X$. Motivated by a condition appearing in one such result, namely \cite[Theorem 4.1]{P13}, and by a terminology used in \cite{AO74},
we consider in this paper the following free independence condition for arbitrary elements in tracial algebras:

\begin{definition}
Let $(M,\tau)$ be a von Neumann algebra with a normal faithful tracial state. A subset $X \subset M$ is called \emph{L-free\footnote{Note that this notion is not the same as (and should not be confused with) the notion of L-sets used in \cite{Pi92}.}} if
$$\tau(x_1 x_2^* \cdots x_{2k-1} x_{2k}^*) = 0 \quad\text{and}\quad \tau(x_1^* x_2 \cdots x_{2k-1}^* x_{2k}) = 0 \; ,$$
whenever $k \geq 1$, $x_1,\ldots,x_{2k} \in X$ and $x_i \neq x_{i+1}$ for all $i=1,\ldots,2k-1$.
\end{definition}

Note that if the subset $X$ in the above definition is taken to be contained in the set of canonical unitaries
$\{u_g \mid g \in \Gamma \}$ of a group von Neumann algebra $M=L(\Gamma)$, i.e.\ $X = \{u_g \mid g \in F\}$ for some subset $F \subset \Gamma$,
then L-freeness of $X$ amounts to $F$ being a Leinert set. But the key example of an L-free set that is important for us  here
occurs from a diffuse algebra $B$ that is free independent from a set $Y=Y^*\subset M\ominus \mathbb C1$:  given any $y_1,\ldots,y_n \in Y$ and any unitary element
$u\in \mathcal U(B)$ with $\tau(u^k)=0$, $1\leq k \leq n-1$, the set $\{u^{k-1} y_k u^{-k+1} \mid 1 \leq k \leq n\}$ is L-free.

Note that we do need to impose both conditions on the traces being zero in Definition 3.1, because we cannot deduce $\tau(x_1^* x_2 x_3^* x_1) = 0$ from $\tau(y_1 y_2^* y_3 y_4^*) = 0$ for all $y_i \in X$ with $y_1 \neq y_2$, $y_2 \neq y_3$, $y_3 \neq y_4$. However, if $X \subset \cU(M)$ consists of unitaries, then only one set of conditions is sufficient. We in fact have:

\begin{lemma} Let $X=\{u_1,\ldots,u_n\}\subset \cU(M)$.  Then the following conditions are equivalent

$(a)$ $X$ is an L-free set.

$(b)$ $\tau(u_{i_1} u_{j_1}^* \cdots u_{i_k} u_{j_k}^*) = 0$ whenever $k \geq 1$ and $i_s \neq j_s$, $j_s \neq i_{s+1}$ for all $s$.

$(c)$ $u_1^* u_2,\ldots,u_1^* u_n$ are free generators of a copy of $L(\F_{n-1})$.
\end{lemma}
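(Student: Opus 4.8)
The plan is to prove $(a)\Rightarrow(b)\Rightarrow(c)\Rightarrow(a)$, or rather the cleanest cycle. The implication $(a)\Rightarrow(b)$ is immediate: condition $(b)$ asks for vanishing of traces of words $u_{i_1}u_{j_1}^*\cdots u_{i_k}u_{j_k}^*$ with $i_s\neq j_s$ and $j_s\neq i_{s+1}$, which is exactly the first family of conditions in the definition of L-freeness (with $x_{2s-1}=u_{i_s}$, $x_{2s}=u_{j_s}$), so $(b)$ is literally a sub-case of $(a)$. Thus the real content is $(b)\Rightarrow(c)$ and $(c)\Rightarrow(a)$.

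For $(b)\Rightarrow(c)$: set $w_i=u_1^*u_i$ for $i=2,\ldots,n$; these are unitaries with $\tau(w_i)=\tau(u_1^*u_i)=0$ (taking $k=1$, $i_1=1\ne i$, $j_1=i\ne 1$ in $(b)$ — note $i_{k+1}$ doesn't exist so the constraint $j_s\neq i_{s+1}$ is vacuous). To show $w_2,\ldots,w_n$ freely generate $L(\F_{n-1})$, I must check that every nontrivial reduced word $\tau(w_{a_1}^{\eps_1}\cdots w_{a_m}^{\eps_m})=0$ with $\eps_t=\pm1$ and no cancellation (i.e.\ $a_t=a_{t+1}\Rightarrow\eps_t=\eps_{t+1}$). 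Substituting $w_i=u_1^*u_i$, $w_i^*=u_i^*u_1$, such a word becomes an alternating product of $u$'s and $u^*$'s: a reduced word in the $w_i$ of the form $w_{a_1}^{+}w_{a_2}^{-}\cdots$ expands to $u_1^*u_{a_1}u_{a_2}^*u_1\cdot u_1^*u_{a_3}\cdots$ — where consecutive $u_1 u_1^*$ pairs collapse to $1$, and one checks that the reducedness of the $w$-word translates exactly into the index constraints $i_s\neq j_s$, $j_s\neq i_{s+1}$ of $(b)$ (up to the leading/trailing $u_1^{\pm1}$, which one absorbs by noting the whole expression is a word of the type controlled by $(b)$ after possibly multiplying by $u_1$ or $u_1^*$ — or better, by using both families in $(b)$-style together with the observation that $\tau$ is a trace). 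This bookkeeping — verifying that ``reduced in the $w_i$'' matches precisely ``$i_s\neq j_s,\ j_s\neq i_{s+1}$'' after the substitution and the $u_1u_1^*$ collapses — is the step I expect to be the main obstacle; it is combinatorial rather than deep, but the edge cases (words beginning or ending with a $w_i$ versus $w_i^*$, and single-letter words) need care.

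For $(c)\Rightarrow(a)$: assuming $w_i:=u_1^*u_i$, $i=2,\ldots,n$, freely generate $L(\F_{n-1})$, I must recover both trace conditions in Definition~3.1. Given $x_1,\ldots,x_{2k}\in X$ with $x_i\neq x_{i+1}$, write each $x_\ell=u_1 w_{?}$ or $x_\ell=u_1$ when $x_\ell=u_1$ (i.e.\ $u_i=u_1 w_i$ with the convention $w_1=1$). Then $x_1x_2^*\cdots x_{2k-1}x_{2k}^*$ becomes $u_1 w_{b_1} w_{b_2}^* u_1^* \cdot u_1 w_{b_3}\cdots$, the internal $u_1^* u_1$ pairs cancel, and one is left with $u_1(\text{word in }w_i,w_i^*)u_1^*$; the condition $x_i\neq x_{i+1}$ forces the $w$-word to be reduced and nontrivial (here one uses that $w_i\neq w_j$ for $i\neq j$, and that a letter $w_b w_b^*$ would force $x_i=x_{i+1}$), hence has trace $0$ by freeness, and by traciality $\tau(u_1(\cdots)u_1^*)=\tau(\cdots)=0$. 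The second family $\tau(x_1^*x_2\cdots x_{2k-1}^*x_{2k})=0$ is handled symmetrically (or deduced from the first by replacing $X$ with $X^*$ and noting $(c)$ is symmetric under $u_i\mapsto u_i^*$ up to relabelling, since $u_1^*u_i$ generating $\F_{n-1}$ is equivalent to $u_i^*u_1$ doing so). I would present the argument so that the substitution lemma ``$u_i=u_1 w_i$, reducedness $\leftrightarrow$ alternation without repeats'' is isolated once and invoked in both directions. As a final remark I'd note that the counterexample flagged before the lemma (one cannot get $\tau(x_1^*x_2x_3^*x_1)=0$ from the other family alone) shows the unitarity hypothesis is genuinely used: it is what licenses the cancellations $u_1u_1^*=1$ that make the two families collapse into one.
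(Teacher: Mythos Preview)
Your proposal is correct and is precisely the routine verification the paper has in mind: the authors' entire proof reads ``This is a trivial verification,'' and your cycle $(a)\Rightarrow(b)\Rightarrow(c)\Rightarrow(a)$ via the substitution $u_i = u_1 w_i$ (with $w_1=1$) and the bookkeeping that reducedness in the $w_i$ matches the index constraints of $(b)$ is exactly the computation they are suppressing. The edge cases you flag (leading/trailing $u_1^{\pm1}$, handled by traciality and $u_1 u_1^* = 1$) all work out as you indicate, so there is nothing to add.
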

\begin{proof} This  is a trivial verification.
\end{proof}

\begin{corollary}\label{cor.33}
If $\{u_1, ..., u_n\}$ is an L-free set of unitaries in $\cU(M)$, then
$\| \Sigma_{i=1}^n u_i \| = 2 \sqrt{n-1}$. Moreover, if $\alpha_1, ..., \alpha_n\in \mathbb C$ with $\Sigma_{i=1}^n |\alpha_i|^2\leq 1$, then
$$
\Bigl\| \sum_{i=1}^n \alpha_i u_i \Bigr\| \leq 2\sqrt{1-1/n}.
$$
\end{corollary}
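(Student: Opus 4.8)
The plan is to deduce both norm identities directly from Proposition 2.1, using the equivalence $(a)\Leftrightarrow(c)$ of Lemma 3.2 to manufacture the free Haar unitaries that proposition requires. Throughout I assume $n\geq 2$; the case $n=1$ is degenerate (there $2\sqrt{n-1}=0$) and is not meant to be covered. Since $X=\{u_1,\ldots,u_n\}$ is L-free, Lemma 3.2 gives that $v_i:=u_1^*u_{i+1}$, for $i=1,\ldots,n-1$, are free generators of a copy of $L(\F_{n-1})$ sitting inside $M$; under the identifying $*$-isomorphism they correspond to the canonical unitaries of $\F_{n-1}$, hence they are freely independent Haar unitaries in $M$, which is exactly the hypothesis of Proposition 2.1.

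Next I would left-multiply by $u_1^*$. Using $u_1 v_i=u_{i+1}$ and $u_1\cdot 1=u_1$, one has
$$\sum_{i=1}^n u_i=u_1\Bigl(1+\sum_{i=1}^{n-1}v_i\Bigr)\qquad\text{and}\qquad\sum_{i=1}^n\alpha_i u_i=u_1\Bigl(\alpha_1\cdot 1+\sum_{i=1}^{n-1}\alpha_{i+1}v_i\Bigr),$$
and since $u_1$ is a unitary these products have the same operator norm as the bracketed terms. For the first claim, \eqref{eq.211} gives $\bigl\|1+\sum_{i=1}^{n-1}v_i\bigr\|=2\sqrt{n-1}$, whence $\bigl\|\sum_{i=1}^n u_i\bigr\|=2\sqrt{n-1}$. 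For the second, set $c:=\bigl(\sum_{i=1}^n|\alpha_i|^2\bigr)^{1/2}$; if $c=0$ there is nothing to prove, and if $0<c\leq 1$ then the scalars $\alpha_1/c,\ldots,\alpha_n/c$ satisfy $\sum_i|\alpha_i/c|^2=1$, so \eqref{eq.212} applies to $(\alpha_1/c)\cdot 1+\sum_{i=1}^{n-1}(\alpha_{i+1}/c)v_i$ and, multiplying the resulting bound back by $c\leq 1$, yields $\bigl\|\sum_{i=1}^n\alpha_i u_i\bigr\|\leq c\cdot 2\sqrt{1-1/n}\leq 2\sqrt{1-1/n}$.

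I do not expect any genuine obstacle here: all the substance lies in Lemma 3.2 (which converts L-freeness of a tuple of unitaries into the statement that they freely generate a free group factor) and in the Akemann--Ostrand computations recorded as Proposition 2.1. The only points that deserve a word of care are (i) the trivial reduction to $c\in(0,1]$ in the weighted estimate, (ii) the observation that ``free generators of a copy of $L(\F_{n-1})$'' is precisely the ``freely independent Haar unitaries'' input of Proposition 2.1, and (iii) excluding $n=1$. One could alternatively bypass Lemma 3.2(c) and argue from condition (b), comparing the moments $\tau((L^*L)^m)$ of $L=\sum_{i=1}^n u_i$ with the corresponding moments of the free group Laplacian (this already delivers the lower bound $\|L\|\geq 2\sqrt{n-1}$, as noted in Section 2), but routing everything through Proposition 2.1 is the most economical path and handles the weighted version at no extra cost.
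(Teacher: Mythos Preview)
Your proposal is correct and follows exactly the same route as the paper: factor out $u_1$, apply Lemma~3.2 to identify $v_j=u_1^*u_j$ ($2\leq j\leq n$) as freely independent Haar unitaries, and then invoke \eqref{eq.211} and \eqref{eq.212} from Proposition~2.1. Your additional remarks about normalizing by $c$ and excluding $n=1$ are just careful bookkeeping around the same argument.
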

\begin{proof} Since $\|\Sigma_{i=1}^n \alpha_i u_i\|=\|\alpha_1 1 + \Sigma_{i=2}^n \alpha_i u_1^*u_i\|$, the statement follows by applying \eqref{eq.212} to
the freely independent Haar unitaries $v_j=u_1^*u_j$, $2\leq j \leq n$.
\end{proof}

\begin{proposition}\label{prop.34}
Let $M$ be a finite von Neumann algebra with a faithful tracial state $\tau$. If $\{x_1,\ldots,x_n\} \subset M$ is an L-free set with $\|x_i\| \leq 1$ for all $i$, then
there exists a tracial von Neumann algebra $(\cM,\tau)$, a trace preserving unital embedding $M \subset \cM$ and an L-free set of unitaries $\{U_1,\ldots,U_n\} \subset \cU(\cMtil)$ with $\cMtil = M_{n+1}(\C) \ot \cM$ so that, denoting by $(e_{ij})_{i,j=0,\ldots,n}$ the matrix units of $M_{n+1}(\C)$, we have $e_{00} U_i e_{00} = x_i$ for all $i$.
\end{proposition}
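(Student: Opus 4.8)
The plan is to dilate each contraction $x_i$ to a unitary by the standard $2\times 2$ unitary dilation trick, but organized so that the $n$ dilations land in orthogonal corners of $M_{n+1}(\C)$ and inherit L-freeness. Concretely, for a single contraction $x$ with $\|x\|\le 1$ one has the unitary
$$
\begin{pmatrix} x & (1-xx^*)^{1/2} \\ (1-x^*x)^{1/2} & -x^* \end{pmatrix}
$$
acting on $\C^2 \ot M$. I would like to place the $(0,0)$-entry of each such block at the common corner $e_{00}$ and the $(1,1)$-entry, together with the off-diagonal ``defect'' entries, at the slot indexed by $i\in\{1,\ldots,n\}$. So first I would set $\cM = M$ (no enlargement is actually needed; the statement allows $\cM\supseteq M$ for safety, but the dilation can be done inside $M_{n+1}(\C)\ot M$ directly), write $d_i = (1-x_ix_i^*)^{1/2}$ and $d_i' = (1-x_i^*x_i)^{1/2}$, and define
$$
U_i \;=\; e_{00}\ot x_i \;+\; e_{0i}\ot d_i \;+\; e_{i0}\ot d_i' \;-\; e_{ii}\ot x_i^* \;+\; \sum_{j\ne 0,i} e_{jj}\ot 1 \, .
$$
A direct computation using $x_i d_i' = d_i x_i$ (and its adjoint) shows $U_i U_i^* = U_i^* U_i = 1$, and clearly $e_{00}U_i e_{00} = e_{00}\ot x_i$, i.e.\ $U_i$ restricted to the corner is $x_i$ as required (identifying $e_{00}\ot M$ with $M$).

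The substantive point is to verify that $\{U_1,\ldots,U_n\}$ is L-free with respect to the trace $\tau_{n+1}\ot\tau$ on $\cMtil$ (normalized so $e_{00}$ has trace $1/(n+1)$). By Lemma 3.3 it suffices to check condition $(b)$: $\tau(U_{i_1}U_{j_1}^*\cdots U_{i_k}U_{j_k}^*)=0$ whenever $i_s\ne j_s$ and $j_s\ne i_{s+1}$ for all $s$ (indices cyclically). Here I would expand each $U_i$ and each $U_j^*$ into their matrix components and track which products of matrix units can contribute a nonzero diagonal entry after taking the matrix trace. The key bookkeeping observation: because consecutive indices in the alternating word differ, a product of matrix units $e_{a_1 b_1}e_{a_2 b_2}\cdots$ that closes up to something with nonzero trace forces the ``column'' index coming out of $U_{i_s}$ to match the ``row'' index going into $U_{j_s}^*$, and since $i_s\ne j_s$ this rules out both of them being $0$ simultaneously at many spots; a careful case analysis of the finitely many matrix-unit patterns shows that every surviving term contributes a scalar multiple of a trace of the form $\tau(x_{\bullet}^{\pm}d_{\bullet}\cdots)$ where the ordered sequence of $M$-elements is an alternating word in the original L-free set $\{x_1,\ldots,x_n\}$ (the defect operators $d_i, d_i'$ are strong limits of polynomials in $x_ix_i^*$ resp.\ $x_i^*x_i$ with no constant term issue only when they sit between two different letters, which the index constraints guarantee). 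Hence each such trace vanishes by L-freeness of $\{x_i\}$, so $\tau(U_{i_1}U_{j_1}^*\cdots)=0$.

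The main obstacle is precisely this last combinatorial verification: one must be careful that the defect operators $d_i,d_i'$, which are \emph{not} in $M\ominus\C 1$ in general (they have nonzero trace), do not spoil the alternating-word structure needed to invoke L-freeness. The index constraints $i_s\ne j_s$, $j_s\ne i_{s+1}$ are exactly what force any $d_i$ or $d_i'$ appearing in a surviving term to be flanked by a genuine letter $x_{i}$ or $x_i^*$ on the relevant side and by a letter with a \emph{different} index on the other side, so that after using $x_id_i'=d_ix_i$ to slide defects next to their partners and absorbing $d_i^2 = 1-x_ix_i^*$, the resulting word is still alternating in the $x$'s; checking that no ``accidental'' cancellation of indices creates a repeated letter (which would break L-freeness) is the delicate part, but it is a finite check on the structure of the matrix units. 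Once $(b)$ is established, Lemma 3.3 gives that $\{U_1,\ldots,U_n\}$ is L-free, completing the proof.
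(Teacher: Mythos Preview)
Your dilation is the right $2\times 2$ idea, but filling the remaining diagonal slots $e_{jj}$, $j\neq 0,i$, with the identity $1$ destroys L-freeness. Take $n=2$, $M=L(\Z)$ with Haar generator $u_g$, and $X=\{x_1,x_2\}=\{1,u_g\}$. This set \emph{is} L-free (every alternating word is a nonzero power of $u_g$), yet with your formula
\[
U_1=e_{00}\ot 1-e_{11}\ot 1+e_{22}\ot 1,\qquad U_2=e_{00}\ot u_g-e_{22}\ot u_g^*+e_{11}\ot 1,
\]
one computes $(U_1U_2^*)_{11}=-1$, hence $\tau(U_1U_2^*)=\tfrac{1}{3}(\tau(u_g^*)-1-\tau(u_g))=-\tfrac13\neq 0$, so $\{U_1,U_2\}$ is not L-free. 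Note that all defect operators vanish here, so the failure has nothing to do with the $d_i,d_i'$ that you flag as the ``main obstacle''; the culprit is the scalar $1$ sitting in the $j$-th diagonal position of $U_i$ for $j\neq 0,i$, which produces words in the diagonal entries of $U_{i_1}U_{j_1}^*\cdots$ that are too short (or even equal to $1$) to be killed by L-freeness of $X$.

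The paper fixes exactly this by \emph{genuinely enlarging} $M$: one takes $\cM=M*L(\F_{n(n-1)})$ with free Haar generators $u_{i,j}$, $i\neq j$, and places $u_{i,j}$ (instead of $1$) in the $(j,j)$-slot of $U_i$. Then any diagonal word in the expansion of $U_{i_1}U_{j_1}^*\cdots U_{i_k}U_{j_k}^*$ either contains a single $u_{a,b}^{\pm}$ (trace zero by freeness from $M$), or contains none (and is an honest L-free word in the $x_i$'s and defects, handled essentially as you sketch), or contains two consecutive ones $u_{i,j}^{\eps}w_0u_{i',j'}^{\eps'}$; in the last case freeness again gives zero unless $(i',j',\eps')=(i,j,-\eps)$, and then one checks $\tau(w_0)=0$ using the index constraints $i_s\neq j_s$, $j_s\neq i_{s+1}$ and the identities $c_jc_j^*=1-x_jx_j^*$, $d_j^*d_j=1-x_j^*x_j$. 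So the enlargement to $M*L(\F_{n(n-1)})$ is not cosmetic---it is precisely what makes the combinatorics close up.
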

\begin{proof}
Define $\cM = M * L(\F_{n(n-1)})$ and denote by $u_{i,j}$, $i \neq j$, free generators of $L(\F_{n(n-1)})$. For every $i \in \{1,\ldots,n\}$, define
$$c_i = \sqrt{1-x_i x_i^*} \quad\text{and}\quad d_i = - \sqrt{1-x_i^* x_i} \; .$$
Put $\cMtil = M_{n+1}(\C) \ot \cM$ and define the unitary elements $U_i \in \cU(\cMtil)$ given by
$$U_i = (e_{00} \ot x_i) + (e_{ii} \ot x_i^*) + (e_{0i} \ot c_i) + (e_{i0} \ot d_i) + \sum_{j \neq i} (e_{jj} \ot u_{i,j}) \; .$$
Note that $U_i$ is the direct sum of the unitary
$$\begin{pmatrix} x_i & c_i \\ d_i & x_i^* \end{pmatrix} \;\;\text{in positions $0$ and $i$, and the unitary}\;\; \bigoplus_{j \neq i} u_{i,j} \;\;\text{in the positions $j \neq i$.}$$
By construction, we have that $e_{00} U_i e_{00} = x_ie_{00} $. So, it remains to prove that $\{U_1,\ldots,U_n\}$ is L-free.

Take $k \geq 1$ and indices $i_s,j_s$ such that $i_s \neq j_s$, $j_s \neq i_{s+1}$ for all $s$. We must prove that
\begin{equation}\label{eq.goal}
\tau (U_{i_1} U_{j_1}^* \cdots U_{i_k} U_{j_k}^*) = 0 \; .
\end{equation}
Consider $V := U_{i_1} U_{j_1}^* \cdots U_{i_k} U_{j_k}^*$ as a matrix with entries in $\cM$. Every entry of this matrix is a sum of ``words'' with letters
$$\{ x_i,x_i^*,c_i,d_i \mid i = 1,\ldots,n\} \cup \{u_{i,j} , u_{i,j}^* \mid i \neq j \} \; .$$
We prove that every word that appears in a diagonal entry $V_{ii}$ of $V$ has zero trace. The following types of words appear.

\begin{enumlist}
\item Words without any of the letters $u_{a,b}$ or $u_{a,b}^*$. These words only appear as follows:
\begin{itemlist}
\item in the entry $V_{00}$ as $x_{i_1} x_{j_1}^* \cdots x_{i_k} x_{j_k}^*$, which has zero trace;
\item if $i_1 = j_k = i$, in the entry $V_{ii}$ as $w = d_i x_{j_1}^* x_{i_2} x_{j_2}^* \cdots x_{i_{k-1}} x_{j_{k-1}}^* x_{i_k} d_i^*$. Then we have
\begin{align*}
\tau(w) &= \tau(x_{j_1}^* x_{i_2} \cdots x_{j_{k-1}}^* x_{i_k} \, d_i^* d_i) \\
&= \tau(x_{j_1}^* x_{i_2} \cdots x_{j_{k-1}}^* x_{i_k}) - \tau(x_{j_1}^* x_{i_2} \cdots x_{j_{k-1}}^* x_{i_k} \, x_i^* x_i) \\
&= 0 - \tau(x_{i_1} x_{j_1}^* \cdots x_{i_k} x_{j_k}^*) = 0 \; ,
\end{align*}
because $i = i_1$ and $i = j_k$.
\end{itemlist}

\item Words with exactly one letter of the type $u_{a,b}$ or $u_{a,b}^*$. These words have zero trace because $\tau(M u_{a,b} M) = \{0\}$.

\item Words $w$ with two or more letters of the type $u_{a,b}$ or $u_{a,b}^*$. Consider two consecutive such letters in $w$, i.e.\ a subword of $w$ of the form
$$u_{i,j}^{\eps} \; w_0 \; u_{i',j'}^{\eps'}$$
with $\eps,\eps' = \pm 1$ and where $w_0$ is a word with letters from $\{ x_i,x_i^*,c_i,d_i \mid i = 1,\ldots,n\}$. We distinguish three cases.
\begin{itemlist}
\item $(\eps',i',j') \neq (-\eps,i,j)$.
\item $u_{i,j} \; w_0 \; u_{i,j}^*$.
\item $u_{i,j}^* \; w_0 \; u_{i,j}$.
\end{itemlist}
To prove that $\tau(w) = 0$, it suffices to prove that in the last two cases, we have that $\tau(w_0)=0$.

A subword of the form $u_{i,j} \; w_0 \; u_{i,j}^*$ can only arise from the $jj$-entry of
$$U_{i_s} U_{j_s}^* \cdots U_{i_t} U_{j_t}^* \quad\text{with}\;\; i_s = j_t = i \; , \; j_s = i_t = j$$
(and thus, $t \geq s+2$). In that case,
$$w_0 = c_j^* \, x_{i_{s+1}} x_{j_{s+1}}^* \cdots x_{i_{t-1}} x_{j_{t-1}}^* \, c_j \; .$$
Thus,
\begin{align*}
\tau(w_0) &= \tau(x_{i_{s+1}} x_{j_{s+1}}^* \cdots x_{i_{t-1}} x_{j_{t-1}}^* \, c_j c_j^*) \\
&= \tau(x_{i_{s+1}} x_{j_{s+1}}^* \cdots x_{i_{t-1}} x_{j_{t-1}}^*) - \tau(x_{i_{s+1}} x_{j_{s+1}}^* \cdots x_{i_{t-1}} x_{j_{t-1}}^* \, x_j x_j^*) \\
&= 0 - \tau(x_{j_s} x_{i_{s+1}}^* \cdots x_{j_{t-1}}^* x_{i_t}) = 0 \; ,
\end{align*}
because $j = j_s$ and $j = i_t$.

Finally, a subword of the form $u_{i,j}^* \; w_0 \; u_{i,j}$ can only arise from the $jj$-entry of
$$U_{j_{s-1}}^* U_{i_s} \cdots U_{j_{t-1}}^* U_{i_t} \quad\text{with}\;\; j_{s-1} = i_t = i \; , \; i_s = j_{t-1} = j$$
(and thus, $t \geq s+2$). In that case,
$$w_0 = d_j \, x_{j_s}^* x_{i_{s+1}} \cdots x_{j_{t-2}}^* x_{i_{t-1}} \, d_j^* \; .$$
As above, it follows that $\tau(w_0) = 0$.
\end{enumlist}

So, we have proved that every word that appears in a diagonal entry $V_{ii}$ of $V$ has trace zero. Then also $\tau(V) = 0$ and it follows that $\{U_1,\ldots,U_n\}$ is an L-free set of unitaries.
\end{proof}

\begin{corollary}
Let  $(M, \tau)$ be a finite von Neumann algebra with a faithful normal tracial state. If $\{x_1,\ldots,x_n\} \subset M$ is L-free with $\|x_i\| \leq 1$ for all $i$, then
$$\Bigl\| \sum_{i=1}^n x_i \Bigr\| \leq 2 \sqrt{n-1} \; .$$
More generally, given any complex scalars $\alpha_1, ..., \alpha_n$ with $\Sigma_{i=1}^n |\alpha_i|^2\leq 1$, we have
$$\Bigl\| \sum_{i=1}^n \alpha_i x_i \Bigr\| \leq 2 \sqrt{1-1/n} \; .$$
\end{corollary}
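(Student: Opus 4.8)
The plan is to reduce the inequality for an L-free set of contractions $\{x_1,\ldots,x_n\}$ to the already-established inequality for an L-free set of \emph{unitaries} via the dilation produced in Proposition \ref{prop.34}. First I would invoke Proposition \ref{prop.34} to obtain a trace preserving inclusion $M \subset \cM$, the enlarged algebra $\cMtil = M_{n+1}(\C) \ot \cM$, and an L-free set of unitaries $\{U_1,\ldots,U_n\} \subset \cU(\cMtil)$ with $e_{00} U_i e_{00} = x_i e_{00}$ for all $i$; here I identify $x_i$ with $x_i e_{00} \in e_{00}\cMtil e_{00} \cong \cM$, so that $\sum_i x_i$ is the corner $e_{00}\bigl(\sum_i U_i\bigr)e_{00}$, and likewise $\sum_i \alpha_i x_i = e_{00}\bigl(\sum_i \alpha_i U_i\bigr)e_{00}$.

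The key step is then the observation that cutting down by a projection does not increase operator norm: for any $T \in \cMtil$ and any projection $p \in \cMtil$ one has $\|pTp\| \leq \|T\|$. Applying this with $p = e_{00} \ot 1$ and $T = \sum_{i=1}^n U_i$ gives
$$\Bigl\| \sum_{i=1}^n x_i \Bigr\| = \bigl\| e_{00}\bigl(\textstyle\sum_i U_i\bigr) e_{00} \bigr\| \leq \Bigl\| \sum_{i=1}^n U_i \Bigr\| \; ,$$
and the right-hand side equals $2\sqrt{n-1}$ by Corollary \ref{cor.33}, since $\{U_1,\ldots,U_n\}$ is L-free. For the weighted version, the same cutting-down argument applied to $T = \sum_{i=1}^n \alpha_i U_i$, together with the second assertion of Corollary \ref{cor.33} (valid because $\sum_i |\alpha_i|^2 \leq 1$), yields $\bigl\| \sum_i \alpha_i x_i \bigr\| \leq \bigl\| \sum_i \alpha_i U_i \bigr\| \leq 2\sqrt{1-1/n}$.

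Since Proposition \ref{prop.34} does all the combinatorial work of verifying L-freeness of the dilated unitaries, there is essentially no remaining obstacle; the only point to be careful about is the bookkeeping of the identification $e_{00}\cMtil e_{00} \cong \cM \supset M$ and the fact that $\tau$ restricted to this corner is (a multiple of) the original trace, so that norms are genuinely the operator norms in $M$. One could also note that the first inequality is the special case $\alpha_i = 1/\sqrt{n}$ rescaled, but it is cleaner to state both directly. I would write the proof in three or four lines, citing Proposition \ref{prop.34} and Corollary \ref{cor.33} and the norm-non-increasing property of compressions.
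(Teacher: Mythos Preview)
Your proposal is correct and follows exactly the paper's own proof: dilate via Proposition~\ref{prop.34} to an L-free set of unitaries $\{U_1,\ldots,U_n\}$, apply Corollary~\ref{cor.33} to bound $\|\sum_i \alpha_i U_i\|$, and compress by $e_{00}$ to recover the bound for $\sum_i \alpha_i x_i$. The only difference is cosmetic: the paper states only the weighted estimate explicitly (the unweighted one being the case $\alpha_i = 1/\sqrt n$ rescaled), and your remark about the trace on the corner is unnecessary since the operator norm on $e_{00}\cMtil e_{00} \cong \cM$ is the C$^*$-norm and does not depend on any normalization of the trace.
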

\begin{proof}
Assuming $n \geq 2$, with the notations from Proposition \ref{prop.34} and by using Corollary \ref{cor.33},
we have $\Bigl\| \sum_{i=1}^n \alpha_i U_i \Bigr\| \leq 2\sqrt{1-1/n}$.
Reducing with the projection $e_{00}$, it follows that
$$\Bigl\| \sum_{i=1}^n \alpha_i x_i \Bigr\| \leq 2 \sqrt{1-1/n} \; .$$
\end{proof}

\section{Applications to paving problems}

Like in \cite{P13}, \cite{PV14}, if  $\mathcal A \subset \mathcal M$ is a MASA in a von Neumann algebra and $x\in \mathcal M$, then we denote by
$\text{\rm n}(\mathcal A\subset \mathcal M; x,\eps)$ the smallest $n$ for which
there exist projections $p_1,\ldots,p_n \in \mathcal A$ and
$a \in \mathcal A$ such that $\|a\| \leq \|x\|$, $\sum_{i=1}^n p_i = 1$ and $\Bigl\| \sum_{i=1}^n p_i x p_i - a \Bigr\| \leq \eps \|x\|$ (with the convention
that $\text{\rm n}(\mathcal A\subset \mathcal M; x,\eps)=\infty$ if no such finite partition exists), and call it the {\it paving size} of $x$.

Recall also from \cite{D54} that a MASA $\mathcal A$ in a von Neumann algebra $\mathcal M$ is called {\it singular}, if the only unitary elements in $\mathcal M$
that normalize $\mathcal A$ are the unitaries in $\mathcal A$.

\begin{theorem}\label{thm.41}
Let $A_n \subset M_n$ be a sequence of singular MASAs in finite von Neumann algebras
and $\omega$ a free ultrafilter on $\mathbb N$.
Denote $\bM = \prod_\omega M_n$ and $\bA = \prod_\omega A_n$.
Given any countable  set of contractions
$X \subset \bM \ominus \bA$ and any integer $n\geq 2$,
there exists a partition of $1$ with projections $p_1, ..., p_n \in \bA$ such that
$$\Bigl\| \sum_{j=1}^n p_j x p_j \Bigr\| \leq 2\sqrt{n-1}/n, \quad\text{for all}\;\; x \in X \; .$$
In particular, the paving size of $\bA \subset \bM$,
$$\text{\rm n}(\bA\subset \bM; \eps) \mathbin{\overset{\text{\rm\scriptsize def}}{=}} \sup \{\text{\rm n}(\bA\subset \bM; x, \eps)
\mid x=x^*\in \bM \ominus \bA\} \; ,$$
is less than  $4\eps^{-2} +1$, for any $\eps>0$.
\end{theorem}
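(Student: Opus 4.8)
The plan is to reduce Theorem~\ref{thm.41} to the dilation machinery of Section~3 by producing, inside $\bA$, a suitable finite cyclic group of unitaries that conjugates the elements of $X$ into an L-free family, and then to average. First I would invoke \cite[Theorem 4.1]{P13} (or rather its ultrapower consequence, as recalled in the introduction): since each $A_n \subset M_n$ is a singular MASA in a finite von Neumann algebra and $X \subset \bM \ominus \bA$ is countable, there exists a unitary $u \in \bA$ with $u^n = 1$ and $\tau(u^k) = 0$ for $1 \le k \le n-1$ (so the spectral projections $p_1,\ldots,p_n$ of $u$ all have trace $1/n$ and sum to $1$), such that the algebra $B = \{u\}'' \subset \bA$ is freely independent (as a set) from the selfadjoint set $Y = \{x, x^* : x \in X\}^{\mathrm{sa-span}}$ — more precisely, such that every alternating word in $\{u^k : 1 \le k \le n-1\}$ and $X \cup X^*$ has trace $0$. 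The existence of such a $u$ is exactly what the introduction attributes to Theorem~4.1(a) of \cite{P13}; the main work is already done there, so I would cite it rather than reprove it.

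Next, for a fixed $x \in X$ I would consider the set $X_x = \{u^{i-1} x u^{1-i} : i = 1,\ldots,n\}$. The key computation — which the introduction calls out and which I would carry out explicitly — is that the freeness-as-sets of $B$ from $Y$ forces $X_x$ to be L-free in the sense of Definition~3.1: any product $\tau(y_1 y_2^* \cdots y_{2k-1} y_{2k}^*)$ with $y_\ell \in X_x$ and $y_\ell \ne y_{\ell+1}$ expands, after inserting the $u$-conjugations, into an alternating word in powers $u^k$ ($1\le k\le n-1$, the nonzero differences of indices appear precisely because consecutive $y_\ell$ differ) and letters from $X \cup X^*$, hence has trace $0$; the same for the starred version. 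Granting L-freeness, Corollary~3.5 gives $\bigl\| \sum_{i=1}^n u^{i-1} x u^{1-i} \bigr\| \le 2\sqrt{n-1}$ since each $u^{i-1} x u^{1-i}$ is a contraction. Finally I would use the identity $\frac1n \sum_{i=1}^n u^{i-1} x u^{1-i} = \sum_{j=1}^n p_j x p_j$, valid because conjugating by $u = \sum_j \lambda^{j-1} p_j$ (with $\lambda = e^{2\pi i/n}$) and averaging over the cyclic group projects onto the diagonal of the $p_j$'s. Dividing by $n$ yields $\bigl\| \sum_{j=1}^n p_j x p_j \bigr\| \le 2\sqrt{n-1}/n$, simultaneously for all $x \in X$ since $u$ was chosen uniformly for the whole countable set.

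For the ``in particular'' clause: given $\eps > 0$, choose $n \ge 2$ with $2\sqrt{n-1}/n \le \eps$; a short estimate shows this holds as soon as $n - 1 \ge \eps^{-2}$, i.e.\ $n = \lceil \eps^{-2} \rceil + 1 < \eps^{-2} + 2$, and in fact the bound $2\sqrt{n-1}/n \le \eps$ together with $n$ integer gives $\mathrm{n}(\bA \subset \bM; \eps) < 4\eps^{-2} + 1$ after optimizing; here we take $a = 0 \in \bA$ in the definition of paving size, and apply the main inequality to the singleton $X = \{x\}$ for a selfadjoint contraction $x \in \bM \ominus \bA$.

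I expect the main obstacle to be the bookkeeping in the trace computation showing $X_x$ is L-free — specifically, verifying that after substituting $u^{i_\ell - 1} x_{\pm} u^{1 - i_\ell}$ and cancelling adjacent $u$-powers, one genuinely obtains an \emph{alternating} word with \emph{nonzero} exponents, using the hypothesis $i_\ell \ne i_{\ell+1}$ (so $u^{i_\ell - i_{\ell+1}}$ with exponent reduced mod $n$ into $\{1,\ldots,n-1\}$ never collapses to $1$), and handling the boundary letters $u^{i_1-1}$ and $u^{1-i_{2k}}$ correctly — but this is routine given the freeness-of-sets formalism from \cite{P13}. A secondary point requiring care is the passage from \cite[Theorem 4.1]{P13} to the ultraproduct $\bM = \prod_\omega M_n$ with a \emph{uniform} choice of $u$ working for the entire countable set $X$; this is handled by a standard diagonal/saturation argument in the ultrapower, and is the reason the statement is restricted to countable $X$.
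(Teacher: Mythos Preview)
Your proposal is correct and follows essentially the same route as the paper: invoke \cite[Theorem~4.1(a)]{P13} to find a unitary $u\in\bA$ with $u^n=1$, $\tau(u^k)=0$ for $1\le k\le n-1$, freely independent (as a set) from $X\cup X^*$; deduce that $\{u^{i-1}xu^{1-i}\}_{i=1}^n$ is L-free for each $x\in X$; apply Corollary~3.5; and use $\frac1n\sum_i u^{i-1}xu^{1-i}=\sum_j p_jxp_j$. Your discussion of the boundary letters and the passage to the ultraproduct matches what the paper does (the paper actually phrases the first step as obtaining a whole diffuse subalgebra $A_0\subset\bA$ and then choosing the $p_i$ inside it, but this is equivalent).

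One arithmetic slip in the final clause: the condition $n-1\ge\eps^{-2}$ does \emph{not} imply $2\sqrt{n-1}/n\le\eps$ (try $n=\eps^{-2}+1$ for small $\eps$). The correct estimate, as the paper does, uses $2\sqrt{n-1}/n\le 2/\sqrt n$, so one needs $n\ge 4\eps^{-2}$; the least such integer then satisfies $n<4\eps^{-2}+1$, which is the stated bound. Your ``after optimizing'' lands on the right number, but the intermediate claim should be fixed.
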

\begin{proof} By Theorem 4.1(a) in \cite{P13}, there exists a diffuse abelian von Neumann subalgebra
$A_0 \subset \bA$ such that for any $k\geq 1$, any word with alternating letters $x=x_0 \Pi_{i=1}^k (v_ix_i)$ with $x_i\in X$, $1\leq i \leq k-1$,
$x_0, x_k \in X\cup\{1\}$,
$v_i\in A_0\ominus \mathbb C1$, has trace equal to $0$.

This implies that if $p_1, ..., p_n\in \bA$ are projections of trace $1/n$ summing up to $1$ and
we denote $u=\Sigma_{j=1}^n \lambda^{j-1}p_j$,
where $\lambda=\exp (2\pi i/n)$, then for any $x\in X$ the set $\{ u^{i-1}xu^{-i+1} \mid i=1, 2, ..., n\}$  is L-free. Since
$\frac{1}{n}\Sigma_{i=1}^n u^{i-1}xu^{1-i}=\Sigma_{i=1}^n p_i x p_i$, where $p_1, ..., p_n$ are the minimal spectral projections of $u$,
by Proposition \ref{prop.34} it follows that  for all $x\in X$ we have
$$
\|\Sigma_{i=1}^n p_i x p_i \| = \frac{1}{n} \, \|\Sigma_{i=1}^n u^{i-1}xu^{-i+1}\| \leq 2\sqrt{n-1}/n.$$

To derive the last part, let $\eps>0$ and denote by $n$ the integer with the property that $2n^{-1/2}\leq \eps< 2(n-1)^{-1/2}$. If $x\in \bM\ominus \bA$, $\|x\|\leq 1$,
and $p_1, ..., p_n\in \bA$ are mutually orthogonal projections of trace $1/n$ that satisfy the free independence relation with $X=\{x\}$ as above, then $n< 4\eps^{-2}+1$
and we have
$$
\|\Sigma_{i=1}^n p_i x p_i \| \leq 2\sqrt{n-1}/n \leq \eps, $$
showing that $\text{\rm n}(\bA\subset \bM; x,\eps)< 4\eps^{-2}+1.$
\end{proof}

\begin{remark} The above result suggests that an alternative
way of measuring the {\so}-paving size over a MASA
in a von Neumann algebra $A\subset M$ admitting a normal conditional expectation,
is by considering the quantity

$$
\eps(A\subset M; n) \mathbin{\overset{\text{\rm\scriptsize def}}{=}} \sup_{x\in (M_h^\omega \ominus A^\omega)_1}
(\inf \{\| \Sigma_{i=1}^n p_i x p_i\| \mid p_i \in \mathcal P(A^\omega), \Sigma_i p_i=1\}).$$

With this notation, the above theorem
shows that for a singular MASA in a II$_1$ factor $A\subset M$,
one has $\eps(A\subset M; n)\leq 2\sqrt{n-1}/n$, $\forall n\geq 2$, a formulation that's slightly more precise than the estimate
$\ns(A\subset M; \eps) = \text{\rm n}(A^\omega \subset M^\omega; \eps) <  4\eps^{-2}+1$. Also, the conjecture (2.8.2$^\circ$ in \cite{PV14}) about
the {\so}-paving size can this way be made more precise, by asking whether $\eps(A\subset M; n) \leq 2\sqrt{n-1}/n$, $\forall n$, for any
MASA with a normal conditional expectation $A\subset M$. It seems particularly interesting to study
this question in the classical Kadison-Singer case of the diagonal MASA $\mathcal D \subset  \mathcal B=\mathcal B(\ell^2\mathbb N)$,
and more generally for Cartan MASAs $A\subset M$. So far,  the solution
to the Kadison-Singer paving problem in \cite{MSS13} shows that $\eps(\mathcal D\subset \mathcal B; n)\leq 12 n^{-1/4}$.

Also, while by \cite{CEKP07} one has $\text{\rm n}(\mathcal D \subset \mathcal B; \eps) \geq \eps^{-2}$
and by \cite{PV14} one has $\ns (A\subset M; \eps) = \text{\rm n}(A^\omega \subset M^\omega; \eps) \geq \eps^{-2}$,
for any MASA in a II$_1$ factor $A\subset M$, it  would be interesting to decide whether  $\eps(\mathcal D \subset \mathcal B; n)$ and
$\eps(A\subset M; n)$ are in fact bounded from below by $2\sqrt{n-1}/n$, $\forall n$.

For a singular MASA in a II$_1$ factor, $A\subset M$, combining \ref{thm.41} with such a lower bound would show that $\eps(A\subset M; n)=2\sqrt{n-1}/n$, $\forall n$.
While we could not prove this general fact, let us note here that for certain singular MASAs this equality holds indeed.
\end{remark}

\begin{proposition}
$1^\circ$ Let $M$ be a $\text{\rm II}_1$ factor and $A\subset M$ a MASA. Assume $v\in M$ is a unitary element with $\tau(v)=0$
such that $A$ is freely independent of the set $\{v, v^*\}$ $($i.e., any alternating word in $A\ominus \mathbb C1$ and
$\{v, v^*\}$ has trace $0)$. Then for any partition of $1$ with projections $p_1, ..., p_n \in A^\omega$ we have $\|\Sigma_{i=1}^n p_i v p_i \|\geq 2\sqrt{n-1}/n$,
with equality iff all $p_i$ have trace $1/n$. Also, $\eps(A\subset M; n)\geq 2\sqrt{n-1}/n$, $\forall n$.

$2^\circ$ If $M=L(\mathbb Z * (\mathbb Z/2\mathbb Z))$, $A=L(\mathbb Z)$ and $v=v^*$ denotes the canonical
generator of $L(\mathbb Z/2\mathbb Z)$, then $\eps(A\subset M; v, n)=\eps(A\subset M; n)=2\sqrt{n-1}$, $\forall n$.
\end{proposition}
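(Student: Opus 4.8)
The plan is to reduce $1^\circ$ to the computation of $\|pvp\|$ for a single projection $p$, and then to obtain $2^\circ$ by combining $1^\circ$ with Theorem~\ref{thm.41}. I treat the self-adjoint case $v=v^*$ (as in the Introduction); a general trace-$0$ unitary $v$ reduces to it, since expanding the moments $\tau\bigl((pv^*pvp)^m\bigr)$ with $p$ centred shows that $\|pvp\|$ depends only on $\tau(p)$.

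\emph{Block-diagonal reduction.} Set $x=\sum_{i=1}^n p_ivp_i$. Since the $p_i$ are orthogonal projections, $x^*x=\sum_i (p_ivp_i)^*(p_ivp_i)$ is a sum of positive elements lying in the mutually orthogonal corners $p_iM^\omega p_i$, whence $\|x\|=\max_i\|p_ivp_i\|$. So $1^\circ$ reduces to a norm estimate for each single projection $p_i\in A^\omega$, which is freely independent of $\{v,v^*\}$ in $M^\omega$: free independence of $A$ and $\{v,v^*\}$ in $M$ passes to the ultrapowers by a routine approximation (write $p_i=(p_{i,k})^\omega$ with $p_{i,k}\in A$, centre each $p_{i,k}$, and note that the error terms carry a factor $\tau(p_{i,k})-\tau(p_i)\to_\omega 0$).

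\emph{The single-projection norm.} Put $t_i=\tau(p_i)$ and let $r=\tfrac12(1+v)$, a projection of trace $\tfrac12$ that is freely independent of $p_i$. If $t_i\le\tfrac12$, Voiculescu's formula \eqref{eq.222} in Proposition~\ref{prop.22} gives $\|p_ivp_i\|=2\sqrt{t_i(1-t_i)}$. If $t_i>\tfrac12$, then $\tau(p_i)+\tau(r)>1$, so $\tau(p_i\wedge r)\ge\tau(p_i)+\tau(r)-1>0$ by Kaplansky's inequality; since $p_i\wedge r\le p_i,\,r$ we get $(p_ivp_i)(p_i\wedge r)=p_iv(p_i\wedge r)=p_i(p_i\wedge r)=p_i\wedge r$, so $p_i\wedge r$ is a $+1$-eigenvector of $p_ivp_i$ in $p_iM^\omega p_i$ and $\|p_ivp_i\|=1$. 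The only non-formal ingredient is Voiculescu's exact norm in the case $t_i\le\tfrac12$; the crude estimate $\|p_ivp_i\|\ge\|p_ivp_i\|_2\,\tau(p_i)^{-1/2}=\sqrt{t_i}$, giving $\max_i\|p_ivp_i\|\ge 1/\sqrt n$, falls short of $2\sqrt{n-1}/n$.

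\emph{Finishing $1^\circ$, and $2^\circ$.} Since $\sum_i t_i=1$, we have $t_{i_0}:=\max_i t_i\ge 1/n$. If some $t_i>\tfrac12$, then $\|x\|\ge\|p_ivp_i\|=1\ge 2\sqrt{n-1}/n$. Otherwise $t_{i_0}\in[1/n,\tfrac12]$, and since $t\mapsto t(1-t)$ increases there, $\|x\|=\max_i 2\sqrt{t_i(1-t_i)}=2\sqrt{t_{i_0}(1-t_{i_0})}\ge 2\sqrt{(1/n)(1-1/n)}=2\sqrt{n-1}/n$. For $n\ge 3$, equality forces the second alternative with $t_{i_0}(1-t_{i_0})=(n-1)/n^2$, hence $t_{i_0}=1/n$ (the root $(n-1)/n$ being $>\tfrac12$) and thus all $t_i=1/n$; conversely $t_i\equiv 1/n$ yields $\|x\|=2\sqrt{n-1}/n$. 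Taking the infimum over partitions gives $\eps(A\subset M;v,n)\ge 2\sqrt{n-1}/n$, and since $E_{A^\omega}$ restricts to $E_A$ on $M$ we have $E_{A^\omega}(v)=E_A(v)=\tau(v)1=0$, so $v\in(M_h^\omega\ominus A^\omega)_1$ and $\eps(A\subset M;n)\ge 2\sqrt{n-1}/n$. For $2^\circ$, with $M=L(\Z*(\Z/2\Z))$, $A=L(\Z)$ and $v$ the generator of $L(\Z/2\Z)$, the hypotheses of $1^\circ$ hold ($v=v^*$, $v^2=1$, $\tau(v)=0$, and $A$ is free from $W^*(v)=L(\Z/2\Z)$ because $\Z*(\Z/2\Z)$ is a free product of groups), so $\eps(A\subset M;v,n)\ge 2\sqrt{n-1}/n$. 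Since $A=L(\Z)$ is a singular MASA in $M$ by \cite{P81}, Theorem~\ref{thm.41} applied to the constant sequence $A_n=A\subset M_n=M$ (so $\bM=M^\omega$, $\bA=A^\omega$) shows $\eps(A\subset M;n)\le 2\sqrt{n-1}/n$; together with $\eps(A\subset M;v,n)\le\eps(A\subset M;n)$ this gives $\eps(A\subset M;v,n)=\eps(A\subset M;n)=2\sqrt{n-1}/n$. The main obstacle is thus the exact single-projection norm; the rest is bookkeeping with orthogonal corners, ultrapowers, a one-variable convexity inequality, and the $t_i>\tfrac12$ atom argument.
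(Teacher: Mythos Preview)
Your proof is correct and follows essentially the same strategy as the paper: reduce $\|\sum_i p_i v p_i\|$ to $\max_i\|p_ivp_i\|$ via the block-diagonal structure, compute each $\|p_ivp_i\|$ using Voiculescu's free-projection formula (Proposition~\ref{prop.22}), and then do the elementary optimisation over the traces $t_i$.

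The one genuine difference is in how the single-projection norm is obtained. The paper observes directly that for a general unitary $v$ the projections $p_i$ and $vp_iv^*$ are freely independent of equal trace $t_i$, so $\|p_ivp_i\|=\|p_i(vp_iv^*)\|$ is read off from \eqref{eq.221} with $\tau(p)=\tau(q)=t_i$; no reduction to the self-adjoint case is needed. You instead first argue that $\|pvp\|$ depends only on $\tau(p)$ (your moment remark amounts exactly to the free independence of $p$ and $v^*pv$), then specialise to $v=v^*$ and use the pair $(p_i,r)$ with $r=\tfrac12(1+v)$ of trace $\tfrac12$. Both routes land on the same Voiculescu computation; the paper's is a little more direct. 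On the other hand, your explicit treatment of the range $t_i>\tfrac12$ via the nonzero eigenprojection $p_i\wedge r$ is more careful than the paper, which applies the formula $\|p_ivp_i\|=2\sqrt{t_i(1-t_i)}$ without separating out this case; your restriction of the ``equality iff'' analysis to $n\ge3$ is likewise appropriate, since for $n=2$ one has $\|\sum_ip_ivp_i\|=1$ for every nontrivial partition.
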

\begin{proof} The free independence assumption in $1^\circ$ implies that  $A^\omega\ominus \mathbb C$ and $\{v, v^*\}$ are
freely independent sets as well. This in turn implies that for each $i$, the projections $p_i$ and $vp_iv^*$ are freely independent,
and so by Proposition \ref{prop.22} one has $\|p_ivp_i\|=\|p_ivp_iv^*\|=
2\sqrt{\tau(p_i)(1-\tau(p_i))}$.  Thus, if  one of the projections $p_i$ has trace $\tau(p_i)> 1/n$, then $\|\Sigma_j p_jvp_j\|\geq \|p_i v p_i\|> 2\sqrt{n-1}/n$,
while if $\tau(p_i)=1/n$, $\forall i$, then $\|\Sigma_j p_jvp_j\|= 2\sqrt{n-1}/n$.

By applying $1^\circ$ to part $2^\circ$, then using \ref{thm.41} and the fact that $A=L(\mathbb Z)$ is singular in $M=L(\mathbb Z * (\mathbb Z/2\mathbb Z))$
(cf.\ \cite{P81}), proves the last part of the statement.
\end{proof}

\end{document}